\newtheorem{theo}{Theorem}
\newtheorem{prop}[theo]{Proposition}
\def\ov#1{\overline{#1}}
\author{Gunnar Þór Magnússon}
\address{Hafnarfjörður, Iceland}
\email{gunnar@magnusson.io}
\date{\today}
\title{A remark on Wu's remark on\\holomorphic sectional curvature}
\begin{document}

\maketitle

Wu \cite{wu1973remark} proved that if $g$ and $h$ are Hermitian metrics on a
complex manifold and their holomorphic sectional curvatures satisfy $H_g \leq
-K_g < 0$ and $H_h \leq -K_h < $ then the holomorphic sectional curvature of
their sum satisfies
\[
H_{g + h} \leq \frac{-K_g K_h}{K_g + K_h} < 0.
\]
The proof shows
that given any tangent vector $\xi \in T_{X,x}$ there
exists a holomorphic embedding $f : D \to X$ of the unit disk in $X$ such that
$f(0) = x$ and $f'(0) = \xi$ and such that the holomorphic sectional curvature
of a given metric at $\xi$ is the curvature of the pullback metric
to the disk at its center.
Thus we reduce to the case of one complex variable,
which had already been proven by Grauert and
Reckziegel~\cite{grauert1965hermitesche}.

In this note we point out that there is a direct route to Wu's theorem by using
an expression for the curvature tensor of the sum of Hermitian metrics one
obtains from Griffiths' theorem on the curvatures of sub- and quotient
bundles~\cite{griffiths1965hermitian}. Our starting point is the following
result, which is given as an exercise in Zheng's lovely
textbook~\cite{zheng2000complex}.

\begin{prop}
Let $E \to X$ be a holomorphic vector bundle over a complex manifold.
Let $g$ and $h$ be Hermitian metrics on $E$.
Then the curvature tensor of $g + h$~is%
\[
R_{g + h} = R_g + R_h - \sigma^*q,
\]
where $q$ is a Hermitian metric on $E$ and $\sigma(\xi)s = D_{g,\xi} s -
D_{h,\xi} s$.
\end{prop}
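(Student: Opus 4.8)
The plan is to obtain the identity from the diagonal embedding trick combined with Griffiths' curvature formula for a holomorphic subbundle. First I would form the bundle $V := E \oplus E$ with the Hermitian metric $G := g \oplus h$. Since the Chern connection is the unique connection compatible with the metric whose $(0,1)$-component is $\bar\partial$, and both of these conditions survive direct sums, the Chern connection of $(V,G)$ is $D_g \oplus D_h$, and hence its curvature is the block-diagonal tensor $R_g \oplus R_h$.

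Next I would realize $(E, g+h)$ as a holomorphic subbundle of $(V,G)$ through the diagonal map $\iota : E \to V$, $\iota(s) = (s,s)$. A local holomorphic frame $e$ of $E$ is sent to the local holomorphic frame $(e,e)$ of its image, so $S := \iota(E)$ is a holomorphic subbundle; moreover $\iota^*G(s,t) = g(s,t) + h(s,t)$, so $\iota$ is a holomorphic isometry from $(E, g+h)$ onto $(S, G|_S)$, and in particular the curvature of $G|_S$ pulls back to $R_{g+h}$. Let $Q := V/S$ be the quotient bundle with its quotient metric. The smooth bundle map $p : V \to E$, $p(s,t) = s-t$, is surjective with kernel $S$, so it induces a smooth isomorphism $\bar p : Q \to E$; let $q$ be the Hermitian metric on $E$ obtained by transporting the quotient metric along $\bar p$. (The Proposition asks only for the existence of such a $q$, so its explicit form is not needed.)

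Now I would apply Griffiths' theorem on subbundles. Let $\beta$ be the second fundamental form of $S$ in $V$: the $(1,0)$-form with values in $\mathrm{Hom}(S,Q)$ that sends a section $\tilde s$ of $S$ to the $Q$-component of $D_V\tilde s$. Griffiths' formula reads
\[
\langle R_S(\xi,\bar\xi)u,v\rangle_S = \langle R_V(\xi,\bar\xi)u,v\rangle_V - \langle \beta(\xi)u,\beta(\xi)v\rangle_Q .
\]
Under $\iota$ the left-hand side is the curvature tensor of $g+h$ and, since $R_V = R_g \oplus R_h$, the first term on the right is $R_g + R_h$; so the proof reduces to computing $\beta$. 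If $s$ is a local holomorphic section of $E$, then $\iota(s)$ is holomorphic, so there $\beta$ is read off from the full Chern connection: $\bar p\bigl(\beta(\xi)\iota(s)\bigr) = p\bigl(D_{g,\xi}s,\, D_{h,\xi}s\bigr) = D_{g,\xi}s - D_{h,\xi}s = \sigma(\xi)s$. By tensoriality this holds at every point (and shows en route that $\beta$ has type $(1,0)$), so after the identifications $S \cong E \cong Q$ the form $\beta$ becomes $\sigma$ and $\langle \beta(\xi)u, \beta(\xi)v\rangle_Q$ becomes $q(\sigma(\xi)u,\sigma(\xi)v)$, the corresponding component of $\sigma^*q$. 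Substituting yields $R_{g+h} = R_g + R_h - \sigma^*q$.

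The only thing requiring care is bookkeeping, not geometry: fixing whether ``the curvature tensor'' means the $\mathrm{End}$-valued $(1,1)$-form or the associated Hermitian-form-valued tensor, and carrying the correct sign and normalization through Griffiths' formula (equivalently, through the block form of the Chern connection of $V$ adapted to the smooth splitting $V = S \oplus S^\perp$). The geometric inputs --- the Chern connection of a direct sum is the direct sum, the diagonal is a holomorphic isometric copy of $(E,g+h)$, and its second fundamental form is $\sigma$ --- are each immediate.
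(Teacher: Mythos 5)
Your proposal is correct and follows essentially the same route as the paper: the diagonal embedding of $(E,g+h)$ into $(E\oplus E, g\oplus h)$ with quotient identified to $E$ via $(s,t)\mapsto s-t$, Griffiths' sub/quotient curvature formula, and the computation of the second fundamental form as $\sigma(\xi)s = D_{g,\xi}s - D_{h,\xi}s$. You simply spell out a few steps the paper leaves implicit (the Chern connection of a direct sum, the identification of the quotient metric with $q$), which is fine.
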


\begin{proof}
Consider short exact sequence
\[
0 \longrightarrow
E \longrightarrow
E \oplus E \longrightarrow
E \longrightarrow
0,
\]
where the first nonzero arrow is $j(s) = s \oplus s$ and the second is $\pi(s
\oplus t) = s - t$.
We equip $E \oplus E$ with the metric $g \oplus h$, the subbundle with its
pullback $g + h$, and the quotient with the induced metric $q$.
The second fundamental form of $E$ in $E \oplus E$ is then
\[
\sigma(\xi) s
= \pi(D_{g \oplus h, \xi} j(s))
= D_{g,\xi}s - D_{h,\xi} s.
\]
The curvature tensor of $g + h$ is then $j^*R_{g \oplus h} - \sigma^* q$, which
unravels to what we wanted to show.
\end{proof}

It is possible to write down exactly what this ``quotient'' metric $q$ is; in fact
\[
q = ((g+h)^{-1}h)^*g
+ ((g+h)^{-1}g)^*h,
\]
where we view Hermitian metrics on $T_X$ as smooth isomorphisms $T_X \to
\ov{T}_X$. We won't need to know this, but it is useful for analyzing what
happens when one of the metrics varies and leads to the basic estimate $q \leq g
+ h$.

\begin{theo}[Wu~\cite{wu1973remark}]
Let $g$ and $h$ be Hermitian metrics on a complex manifold $X$ and assume both
have nonpositive holomorphic sectional curvature.
Then the holomorphic sectional curvature of $g + h$ is nonpositive, and satisfies
\[
H_{g + h} \leq \frac{H_g H_h}{H_g + H_h} < 0
\]
if the holomorphic sectional curvatures of $g$ and $h$ are negative.
\end{theo}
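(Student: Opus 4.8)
The plan is to apply the Proposition with $E = T_X$ and then discard the second fundamental form term, which is harmless precisely because $q$ is a metric. Write the holomorphic sectional curvature of a Hermitian metric $g$ at a nonzero $\xi \in T_{X,x}$ as $H_g(\xi) = R_g(\xi,\ov\xi,\xi,\ov\xi)/g(\xi,\xi)^2$. Evaluating $R_{g+h} = R_g + R_h - \sigma^*q$ on the quadruple $(\xi,\ov\xi,\xi,\ov\xi)$ and using that $(\sigma^*q)(\xi,\ov\xi,\xi,\ov\xi) = |\sigma(\xi)\xi|_q^2 \ge 0$ since $q$ is positive, I get
\[
R_{g+h}(\xi,\ov\xi,\xi,\ov\xi) \le R_g(\xi,\ov\xi,\xi,\ov\xi) + R_h(\xi,\ov\xi,\xi,\ov\xi) = H_g(\xi)\,g(\xi,\xi)^2 + H_h(\xi)\,h(\xi,\xi)^2 .
\]
Dividing through by $(g+h)(\xi,\xi)^2 = \bigl(g(\xi,\xi) + h(\xi,\xi)\bigr)^2 > 0$ and using $H_g(\xi), H_h(\xi) \le 0$ gives $H_{g+h}(\xi) \le 0$, which is the first assertion.

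For the quantitative bound, fix $\xi$ and abbreviate $u = g(\xi,\xi) > 0$, $v = h(\xi,\xi) > 0$, $a = -H_g(\xi) > 0$, $b = -H_h(\xi) > 0$. Then the displayed inequality reads $R_{g+h}(\xi,\ov\xi,\xi,\ov\xi) \le -a u^2 - b v^2$, so
\[
H_{g+h}(\xi) \le \frac{-a u^2 - b v^2}{(u+v)^2}
\le \frac{-ab}{a+b}
= \frac{H_g(\xi) H_h(\xi)}{H_g(\xi) + H_h(\xi)} ,
\]
where the middle step is the elementary inequality $(a u^2 + b v^2)(a+b) \ge ab(u+v)^2$, which after expanding is exactly $(au - bv)^2 \ge 0$. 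The right-hand side is manifestly negative, so this proves the theorem; if one reads $H_g, H_h$ as the suprema of the holomorphic sectional curvatures, the stated bound follows from this pointwise estimate together with the monotonicity of $(s,t) \mapsto st/(s+t)$ on the negative reals.

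In short, the only content beyond the Proposition is the one-variable estimate $(au - bv)^2 \ge 0$; the second fundamental form plays no role because we only want an upper bound on $H_{g+h}$. The one thing that needs care is the sign convention: it must be arranged so that the positive metric $q$ enters the curvature of the sum with the sign that lets us drop $\sigma^*q$ — equivalently, so that passing to a subbundle decreases holomorphic sectional curvature, exactly as in Griffiths' formula.
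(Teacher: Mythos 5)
Your proof is correct and follows essentially the same route as the paper: apply the Proposition to $T_X$, discard the nonnegative term $(\sigma^*q)(\xi,\ov\xi,\xi,\ov\xi)$ because $q$ is positive-definite, and finish with the same elementary inequality (your $(au-bv)^2\ge 0$ is exactly Wu's algebraic step quoted in the paper). Nothing further is needed.
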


\begin{proof}
We know that
\[
R_{g + h} = R_g + R_h - \sigma^* q
\]
and that $q$ is positive-definite.
Then
\[
H_{g+h}(\xi)
\leq \frac{H_g(\xi) |\xi|^4_g + H_h(\xi) |\xi|^4_h}{|\xi|^4_{g + h}}
= \frac{H_g(\xi) |\xi|^4_g + H_h(\xi) |\xi|^4_h}{(|\xi|^2_{g} + |\xi|^2_h)^2}.
\]
Thus $H_{g + h} \leq 0$ if $H_g \leq 0$ and $H_h \leq 0$.

As Wu notes,
for any positive real numbers $a, b, x, y$ we have
\[
\frac{xy}{x + y} \leq \frac{x a^2 + y b^2}{(a + b)^2}
\]
by elementary algebra.
The inequality is reversed if $x$ and $y$ are negative.
If the holomorphic sectional curvatures of $g$ and $h$ are negative we thus get
\[
H_{g + h} \leq \frac{H_g H_h}{H_g + H_h} < 0.
\qedhere
\]
\end{proof}

One can show that if a metric on a compact manifold has negative (or positive)
holomorphic sectional curvature, then so does any sufficiently small
deformation of the metric.
On a K\"ahler manifold this implies that the set of K\"ahler classes with a
representative with negative holomorphic sectional curvature is an open cone
inside the K\"ahler cone.
Wu's theorem further shows that convex combinations of metrics in this cone are
again in it, so the cone is connected.
It would be unexpected if these two cones were the same but I don't know of an
example where they differ.

\bibliographystyle{plain}
\bibliography{main}

\begin{thebibliography}{1}

\bibitem{grauert1965hermitesche}
Hans Grauert and Helmut Reckziegel.
\newblock Hermitesche metriken und normale familien holomorpher abbildungen.
\newblock {\em Mathematische Zeitschrift}, 89(2):108--125, 1965.

\bibitem{griffiths1965hermitian}
Phillip~A Griffiths.
\newblock Hermitian differential geometry and the theory of positive and ample
  holomorphic vector bundles.
\newblock {\em J. Math. Mechanics}, 14(1):117--140, 1965.

\bibitem{wu1973remark}
H~Wu.
\newblock A remark on holomorphic sectional curvature.
\newblock {\em Indiana University Mathematics Journal}, 22(11):1103--1108,
  1973.

\bibitem{zheng2000complex}
Fangyang Zheng.
\newblock {\em Complex differential geometry}.
\newblock Number~18 in AMS/IP Studies in Advanced Mathematics. Amer. Math.
  Soc., 2000.

\end{thebibliography}

\end{document}